\theoremstyle{plain}
\newtheorem{prop}{Proposition}
\newtheorem{thm}[prop]{Theorem}
\newtheorem*{thmb}{Theorem}
\newtheorem{fact}[prop]{Fact}
\theoremstyle{definition}
\theoremstyle{remark}
\newtheorem{rem}[prop]{Remark}
\numberwithin{prop}{section}
\numberwithin{example}{section} 
\numberwithin{claim}{prop}
\numberwithin{step}{prop}
\numberwithin{equation}{section}
\newcommand{\F}{\mathbb{F}}
\newcommand{\E}{\mathbb{E}}
\newcommand{\caO}{\mathcal{O}}
\newcommand{\rdn}{\mathrm{rad}}
\newcommand{\Lie}{\mathfrak{L}}
\newcommand{\euB}{\mathfrak{B}}
\newcommand{\euI}{\mathfrak{I}}
\newcommand{\euH}{\mathfrak{H}}
\newcommand{\euW}{\mathfrak{W}}
\newcommand{\eut}{\mathfrak{t}}
\newcommand{\eus}{\mathfrak{s}}
\newcommand{\env}{\mathrm{env}}
\newcommand{\Der}{\mathrm{Der}}
\newcommand{\ad}{\mathrm{ad}}
\newcommand{\der}{\partial}
\newcommand{\Irr}{\mathrm{Irr}}
\newcommand{\soc}{\mathrm{soc}}
\newcommand{\idn}{\mathrm{ind}}
\newcommand{\rst}{\mathrm{res}}
\newcommand{\uu}{\underline{u}}
\newcommand{\Hom}{\mathrm{Hom}}
\newcommand{\image}{\mathrm{im}}
\newcommand{\kernel}{\mathrm{ker}}
\newcommand{\eps}{\varepsilon}
\newcommand{\spn}{\mathrm{span}}
\newcommand{\argu}{\hbox to 7truept{\hrulefill}}
\begin{document}
\title[Restricted Zassenhaus algebras in characteristic $2$]{The projective indecomposable modules for the restricted Zassenhaus algebras in characteristic~$2$}

\author{B.~Lancellotti}
\address{Dipartimento di Matematica e Applicazioni\\
 Universit\`a degli Studi di Milano-Bicocca\\
  Via Roberto Cozzi 53, I-20125 Milano, Italy}
\email{b.lancellotti@campus.unimib.it}
\author{Th.~Weigel}
\address{Dipartimento di Matematica e Applicazioni\\
 Universit\`a degli Studi di Milano-Bicocca\\
  Via Roberto Cozzi 53, I-20125 Milano, Italy}
\email{thomas.weigel@unimib.it}

\subjclass{Primary 17B20; Secondary 17B50, 17B10}

\keywords{Restricted Lie algebra, projective indecomposable module, induced module,
torus of maximal dimension, Zassenhaus algebra, mi\-nimal $p$-envelope.}

\date{February 11, 2015}

\begin{abstract}
It is shown that for the restricted Zassenhaus algebra
$\euW=\euW(1;n)$, $n>1$, defined over an algebraically closed field $\F$ of characteristic $2$ any projective indecomposable restricted $\euW$-module has maximal possible dimension $2^{2^n-1}$, and thus is isomorphic to some induced module $\idn^{\euW}_{\eut}(\F(\mu))$
for some torus of maximal dimension $\eut$. This phenomenon is in contrast to the
behavior of finite-dimensional non-solvable restricted Lie algebras in characteristic $p>3$
(cf. \cite[Theorem~6.3]{fsw:max}).
\end{abstract}


\maketitle


\section{Introduction}
\label{s:intro}
Let $\Lie$ be a finite-dimensional restricted Lie algebra defined over
an algebraically closed field $\F$ of characteristic $p>0$, and let
$\eut\subseteq\Lie$ be a torus of maximal dimension.
Then, as $\uu(\eut)$ - the {\it restricted universal enveloping algebra} of $\eut$ -
is a commutative and semi-simple associative $\F$-algebra, every irreducible restricted $\eut$-module is $1$-dimensional and also projective. Moreover, there exists a canonical one-to-one correspondence between the isomorphism classes of irreducible restricted $\eut$-modules and the set
\begin{equation}
\label{eq:dualt}
\eut^\circledast=\spn_{\F_p}\{\,t\in\eut\mid t^{[p]}=t\,\}^\ast,
\end{equation}
where $\F_p\subseteq\F$ denotes the prime field, and $\argu^\ast=\Hom_{\F_p}(\argu,\F_p)$. For $\mu\in\eut^\circledast$
let $\F(\mu)$ denote the corresponding irreducible restricted $\eut$-module. Then
\begin{equation}
\label{eq:proj}
P(\mu)=\idn_{\eut}^{\Lie}(\F(\mu))=\uu(\Lie)\otimes_{\uu(\eut)}\F(\mu)
\end{equation}
is a projective restricted $\Lie$-module (cf. \cite[Proposition~2.3.10]{weib:hom}), 
but in general it will be decomposable. Indeed, it has been shown in \cite[Theorem~6.3]{fsw:max} that for $p>3$ the finite-dimensional restricted Lie algebra $\Lie$ is solvable if, and only if, $P(0)$ is indecomposable. The authors conclude their paper with the remark, that for $p=2$ the ``if'' part of the assertion is false, e.g.,
for $p=2$ and $\Lie$ equal to the restricted Zassenhaus algebra $\euW(1;2)$ one has that $P(0)$ is indecomposable and coincides with the projective cover of the trivial $\euW(1;2)$-module. However, 
$\euW(1;2)$ is a non-solvable restricted Lie algebra. The main purpose of this note is to extend this result to all restricted Zassenhaus algebras $\euW(1;n)$, $n>1$, in characteristic $2$ and to all projective restricted modules $P(\mu)$, $\mu\in\eut^\circledast$.

\begin{thmb}
Let $n\geq 1,$ let $\euW(1;n)$  be a restricted Zassenhaus algebra
defined over an algebraically closed field of characteristic $2$, and let $\eut\subseteq\euW(1;n)$ be a torus of maximal dimension. Then $P(\mu)$ is indecomposable for all $\mu\in\eut^\circledast$. In particular, all projective indecomposable restricted
$\euW(1;n)$-modules have maximal possible dimension $2^{2^n-1}$, and $\euW(1;n)$ has maximal $0$-p.i.m.
\end{thmb}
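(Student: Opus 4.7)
The plan is to reduce the theorem to the single claim that $P(\mu)$ is indecomposable for every $\mu\in\eut^{\circledast}$; everything else then falls out. For $\euW(1;n)$ in characteristic~$2$ a torus of maximal dimension is $1$-dimensional, so $\dim \uu(\eut)=2$ and $\dim P(\mu)=2^{2^n}/2=2^{2^n-1}$, which matches the known upper bound for the dimension of a projective indecomposable restricted $\euW(1;n)$-module. Moreover every simple restricted $\euW(1;n)$-module $L$ is an $\eut$-weight module (since $\uu(\eut)$ is split semisimple, by the discussion preceding~(1.1)), so if $L_{\mu}\ne 0$ then the projective cover $P(L)$ is a direct summand of the projective module $P(\mu)$ from~(1.2); once $P(\mu)$ is known to be indecomposable it must coincide with $P(L)$. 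Hence every PIM has dimension $2^{2^n-1}$, and in particular $P(0)$ realises the maximum, so $\euW(1;n)$ has maximal $0$-p.i.m.

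To establish indecomposability, the natural tool is Frobenius reciprocity. Fix a $\eut$-stable decomposition $\euW=\eut\oplus\euK$; by PBW, $\uu(\euW)\cong \uu(\euK)\otimes\uu(\eut)$ as right $\uu(\eut)$-modules, and hence $P(\mu)|_{\eut}\cong \uu(\euK)\otimes\F(\mu)$ with $\eut$ acting adjointly on $\uu(\euK)$ and by $\mu$ on $\F(\mu)$. For any simple restricted $\euW$-module $L$,
\[
\Hom_{\uu(\euW)}(P(\mu),L)\;\cong\;\Hom_{\uu(\eut)}(\F(\mu),L|_{\eut})\;=\;L_{\mu},
\]
so the head of $P(\mu)$ is isomorphic to $\bigoplus_L L^{\oplus \dim L_{\mu}}$. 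Therefore $P(\mu)$ is indecomposable if and only if there is a unique simple $L=L(\mu)$ with $L(\mu)_{\mu}\ne 0$, and moreover $\dim L(\mu)_{\mu}=1$; this is the content that remains to be proved.

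The main obstacle is this uniqueness-and-multiplicity-one statement, which seems to require detailed knowledge of the simple restricted $\euW(1;n)$-modules and their $\eut$-weight decompositions. Because in characteristic~$2$ the set $\eut^{\circledast}$ contains only the two weights $\{0,1\}\subseteq\F_2$ and the positive/negative root spaces of a Cartan-type Lie algebra cease to be distinct, the classical triangular-decomposition arguments are unavailable, and one must argue directly. My plan is to induct on $n$: the case $n=1$ is immediate since $\euW(1;1)$ is solvable, the case $n=2$ is the remark mentioned in \cite[Theorem~6.3]{fsw:max}, and for the inductive step I would exploit the natural filtration of $\euW(1;n)$ by $\eut$-stable subalgebras together with a Mackey-type comparison of $P(\mu)$ with the analogous induced module for $\euW(1;n-1)$, transferring the weight information inductively. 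A complementary or alternative route is to identify the endomorphism ring $\uu(\euK)^{\eut}$ (the $\eut$-invariants under the adjoint action, which is the vector-space image of $\mathrm{End}_{\uu(\euW)}(P(\mu))$ under the isomorphism above) and verify its locality by exhibiting an explicit filtration whose associated graded is a local commutative ring; this is where the finest technical work will be concentrated.
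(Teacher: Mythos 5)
Your reduction via Frobenius reciprocity is correct: since $\F$ is algebraically closed, $P(\mu)$ is projective with head $\bigoplus_S S^{\oplus\dim S_\mu}$, so $P(\mu)$ is indecomposable precisely when exactly one irreducible restricted module $S$ has $S_\mu\neq 0$ and that weight space is $1$-dimensional. This is indeed the crux, and it is exactly what the paper proves. But your proposal stops at naming this statement and offering a plan (induction on $n$ with a Mackey-type comparison, or an analysis of $\uu(\euK)^{\eut}$); neither route is carried out, and the $n=2$ base case is itself only quoted as a remark from \cite{fsw:max}. So the proposal contains a genuine gap: the entire content of the theorem is concentrated in the unproved claim. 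For comparison, the paper settles it by producing a restricted subalgebra $\euB=\spn_\F\{e_j\mid 0\leq j\leq 2^n-2\}$ with $\euW(1;n)=\eut_0\oplus\euB$ for a suitable maximal torus $\eut_0$, identifying $\idn_{\euB}^{\euW}(\F[0])\simeq W(1;n)$ and $\idn_{\euB}^{\euW}(\F[1])\simeq W(1;n)^\ast$, deducing that $\F$ and $L=W(1;n)^{(1)}$ are the only irreducibles, and then showing that $\rst_{\eut_0}^{\euW}(L)$ is the augmentation ideal of $\uu(\eut_0)$ --- i.e.\ $L=\bigoplus_{\mu\neq 0}\F(\mu)$ with each multiplicity exactly $1$ and $L_0=0$. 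That last computation delivers your uniqueness-and-multiplicity-one statement for all $\mu$ simultaneously.

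There is also a factual error you should fix: a torus of maximal dimension in $\euW(1;n)$ is $n$-dimensional, not $1$-dimensional (Proposition~\ref{prop:witt}(a)), so $\dim\uu(\eut)=2^n$ and $\eut^\circledast$ has $2^n$ elements rather than two. Your dimension count $\dim P(\mu)=2^{2^n-1}$ happens to come out right only because $\dim\euW(1;n)=2^n+n-1$ and $2^{(2^n+n-1)-n}=2^{2^n-1}$; as written ($2^{2^n}/2$) it rests on the wrong torus dimension and on conflating $W(1;n)$ with its minimal $2$-envelope. This matters for your strategy: with $2^n$ weights and only two simple modules, the uniqueness claim is not a two-case check but requires knowing the full weight decomposition of $L$, which is precisely what the augmentation-ideal identification provides.
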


The proof of the theorem will be arranged in two major steps. First, it will be shown that every restricted Zassenhaus algebra $\euW(1;n)$ has a direct sum decomposition
\begin{equation}
\label{eq:deco}
\euW(1;n)=\eut_0\oplus\euB,
\end{equation}
where $\eut_0$ is a torus of maximal dimension, and $\euB$
is a certain restricted Lie subalgebra of $\euW(1;n)$.
In the second step it will be shown that the trivial $\euW(1;n)$-module $\F$ and $L= W(1;n)^{(1)}$
are the only isomorphism types of irreducible restricted $\euW(1;n)$-modules. From this fact and \cite[Lemma~6.1]{fsw:max} one concludes that $\euW(1;n)$ has {\it maximal $0$-p.i.m.}, i.e., the projective cover $P_\F$ of the trivial $\euW(1;n)$-module has maximal possible dimension
$2^{2^n-1}$, and thus must be isomorphic to $P(0)$ (cf. \eqref{eq:proj}).
An elementary calculation then shows that the projective cover $P_L$ of the restricted $\euW(1;n)$-module $L$ must be isomorphic to
$P(\mu)$ for $\mu\in\eut^\circledast\setminus\{0\}$, i.e., 
any projective indecomposable restricted $\euW(1;n)$-module has
the maximal possible dimension $2^{2^n-1}$ (cf. Remark~\ref{rem:final}). As a by-product we may also conclude that
the restricted Lie subalgebra $\euB$ has properties similar to the Borel subalgebra in a classical restricted Lie algebra (cf. Remark~\ref{rem:borel} and Theorem~\ref{thm:witt2}(b)).


\section{Preliminaries}
\label{s:llie}
From now on we will assume that $\F$ is a field of characteristic $p>0$.

\subsection{Simple algebras and simple restricted Lie algebras}
\label{ss:simp}
A (restricted) $\F$-Lie algebra $\Lie$ is said to be {\it simple}
if any (restricted) Lie ideal $\euI$ of $\Lie$ coincides
either with $\Lie$ or with $0$. The following fact is straightforward and its easy proof is left to the reader (cf. \cite[\S 6]{lanc:tesi}). 

\begin{fact}
\label{fact:simp} 
Let $\F$ be a field of characteristic $p>0$.

\noindent
{\rm (a)} Let $\Lie$ be a non-abelian finite-dimensional simple restricted $\F$-Lie algebra, and let
$L=\soc_{\Lie}(\Lie)$ denote the socle of the adjoint $\Lie$-module. Then $L$ is a minimal Lie ideal of $\Lie$ and a non-abelian simple $\F$-Lie algebra.
Moreover, $\Lie$ coincides with the minimal $p$-envelope 
$\env_p(L)$ of $L$.

\noindent
{\rm (b)} Let $L$ be a non-abelian finite-dimensional simple $\F$-Lie algebra. Then the minimal $p$-envelope $\env_p(L)$ is a finite-dimensional
simple restricted $\F$-Lie algebra, and one has $L=\soc_{\env_p(L)}(\env_p(L))$.
\end{fact}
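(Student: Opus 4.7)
My plan is to handle (a) and (b) in parallel, since both ultimately rest on two ingredients: the interaction between Lie ideals and restricted Lie ideals via the $p$-closure, and the embedding $\ad\colon L\hookrightarrow \Der(L)$ available when $L$ has trivial center. The key technical lemma I would establish first is: \emph{if $\euI$ is a Lie ideal of a restricted $\F$-Lie algebra, then the restricted subalgebra $\tilde{\euI}$ generated by $\euI$ is again a Lie ideal.} The proof uses only the defining identity $\ad(x^{[p]})=\ad(x)^p$ together with the observation that $\ad(x)$ stabilises $\euI$ whenever $x\in\euI$, combined with a routine induction on how elements of $\tilde{\euI}$ are generated from $\euI$ by brackets and $[p]$-powers.

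For (a), I first pick a minimal Lie ideal $J\subseteq\Lie$, which exists by finite-dimensionality. The lemma gives $\tilde{J}$ as a nonzero restricted Lie ideal, so $\tilde{J}=\Lie$ by restricted simplicity. Applying the same bookkeeping to $M:=\{x\in\Lie:[\Lie,x]\subseteq J\}$ shows that $M$ is also a restricted Lie ideal containing $J$, so $M=\Lie$, i.e.\ $[\Lie,\Lie]\subseteq J$. Uniqueness of $J$ is then immediate: two distinct minimal ideals $J_1,J_2$ would force $[\Lie,\Lie]\subseteq J_1\cap J_2=0$, contradicting non-abelianness; hence $L=J$. Non-abelianness of $L$ follows from the fact that the centralizer $Z_\Lie(L)$ is a restricted ideal, and $Z_\Lie(L)=\Lie$ would force $L\subseteq Z(\Lie)=0$ (the latter vanishing since $Z(\Lie)$ is itself a proper restricted ideal of a non-abelian restricted simple algebra). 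Lie simplicity of $L$ is shown by checking that any Lie ideal $K$ of $L$ is stable under every $\ad(x)^{p^i}=\ad(x^{[p^i]})$ for $x\in L$, hence under $\ad(\tilde{L})=\ad(\Lie)$, so that $K$ is a Lie ideal of $\Lie$ contained in $L$ and thus $0$ or $L$. Finally $Z_\Lie(L)=0$ makes $\ad\colon\Lie\to\Der(L)$ injective, and its image is a restricted subalgebra of $\Der(L)$ generated by $\ad(L)$; this identifies $\Lie$ with $\env_p(L)$.

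For (b), triviality of the center of $L$ makes $\ad$ an injection $L\hookrightarrow\Der(L)$, and I realise $\env_p(L)$ concretely as the restricted subalgebra of $\Der(L)$ generated by $\ad(L)$, which is finite-dimensional because $\Der(L)$ is. The derivation identity $[D,\ad(x)]=\ad(D(x))$ shows that $\ad(L)$ is a Lie ideal of $\Der(L)$, hence of $\env_p(L)$. Any restricted ideal $\euI\subseteq\env_p(L)$ meets $L$ in a Lie ideal of $L$, which is $0$ or $L$ by simplicity; in the first case the faithful action of $\env_p(L)$ on $L$ forces $\euI\subseteq Z_{\env_p(L)}(L)=0$, and in the second case $\euI$ is a restricted subalgebra containing $L$ and must therefore exhaust $\env_p(L)$. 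This yields restricted simplicity of $\env_p(L)$, and the same dichotomy applied to a hypothetical second minimal ideal $K$ with $K\cap L=0$ gives $K=0$, so $L$ is the unique minimal ideal and $L=\soc_{\env_p(L)}(\env_p(L))$. The hard part will be the initial $p$-closure lemma of (a); once that slightly delicate bookkeeping is done, the rest of the argument is a routine interplay between centralizers of ideals and the dichotomy $\euI\cap L\in\{0,L\}$.
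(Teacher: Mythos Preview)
The paper does not actually supply a proof of this fact: it declares it ``straightforward'' and leaves the argument to the reader, pointing only to \cite[\S 6]{lanc:tesi}. Your proposal is correct and furnishes precisely the standard details one would expect behind that sentence---the $p$-closure lemma (that the restricted subalgebra generated by a Lie ideal is again an ideal, hence a restricted ideal), the centralizer/idealiser bookkeeping forcing $[\Lie,\Lie]\subseteq J$ and hence uniqueness of the minimal ideal, and the identification of $\env_p(L)$ inside $\Der(L)$ via $\ad$ once $Z_\Lie(L)=0$ is established. There is nothing to compare against in the paper; your write-up simply fills the gap it left.
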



\subsection{Generalized Borel subalgebras}
\label{ss:genbor}
Let $\Lie$ be a finite-dimensional restricted Lie algebra over an algebraically closed
field $\F$ of characteristic $p>0$. By $\Irr_p(\Lie)$ we will denote the set of isomorphism classes of restricted irreducible $\Lie$-modules. A proper restricted Lie subalgebra 
$\euB$ of $\Lie$ will be said to be a {\it generalized Borel subalgebra}, if
\begin{itemize}
\item[(i)] the unipotent radical $\rdn_{u}(\euB)$ of $\euB$ is non-trivial, i.e.,
$\rdn_{u}(\euB)\not=0$;
\item[(ii)] for any irreducible restricted $\Lie$-module $S$, $[S]\in\Irr_p(\Lie)$, the restricted $\euB/\rdn_{u}(\euB)$-module $S^{\rdn_{u}(\euB)}$ is irreducible;
\item[(iii)] the map $\chi_{\euB}\colon\Irr_p(\Lie)\to\Irr_p(\euB/\rdn_{u}(\euB))$ given by $\chi_{\euB}([S])=[S^{\rdn_{u}(\euB)}]$ is injective.
\end{itemize}

\begin{rem}
\label{rem:borel}
Let $\Lie$ be a classical finite-dimensional simple restricted Lie algebra, and let $\euB$ be a Borel subalgebra of $\Lie$. It is well known that for any irreducible restricted $\Lie$-module $S$, $S^{\rdn_u(\euB)}$ is $1$-dimensional. Moreover the mapping
$\chi_{\euB}\colon\Irr_p(\Lie)\to\Irr_p(\euB/\rdn_{u}(\euB))$ is injective. Hence
$\euB$ is a generalized Borel subalgebra of $\Lie$.
\end{rem}

\begin{rem}
\label{rem:veis}
A similar generalization of the concept of Borel subalgebra to $\mathbb{Z}$-graded restricted Lie algebras, and especially to restricted Lie algebra of Cartan type is given in \cite[Theorem~4(a) and (c)]{veis}.
\end{rem}

\section{The restricted Zassenhaus algebra $\euW(1;n)$}
\label{ss:witt}
Let $\F$ be a field of characteristic $p>0$,
and let $\E$ be a finite subfield of $\F$. The Lie algebra
\begin{equation}
\label{eq:zas1}
L(\E)=\spn_{\F}\{\,u_a\mid a\in \E\,\}
\end{equation}
with bracket given by $[u_a,u_b]=(b-a)\,u_{a+b}$, $a,b\in\E$,
is called the {\it Zassenhaus algebra} with respect to $\E$ (cf. \cite[p.~47ff.]{zass:lie}).

Let $\caO(1;n)$ denote the truncated divided power $\F$-algebra
of dimension $p^{n}$ with basis $\{\,x^{(k)}\mid 0\leq k\leq p^{n-1}\}$.
The {\it Witt algebra} $W(1;n)$ is the Lie subalgebra of $\Der(\caO(1;n))$
of {\it special derivations}, i.e.,
\begin{equation}
\label{eq:zas2}
W(1;n)=\spn_{\F}\{\,e_j=x^{(j+1)}\cdot\der\mid -1\leq j\leq p^n-2\,\},
\end{equation}
where $\partial$ denotes the derivation defined by
\begin{equation}\label{part}
\partial (x^{(a)})=
\begin{cases} 0, & \mbox{if }a=0,\\
x^{(a-1)}, & \mbox{if }1\leq a\leq p^{n}-1.
\end{cases} 
\end{equation}
In particular, one has
\begin{equation}
\label{eq:stcon}
[e_{i},e_{j}]=
\begin{cases}
c_{ij}\cdot e_{i+j} & \text{if $-1\leq i\leq p^{n}-2$,}\\
\hfil 0\hfil&\text{otherwise,}
\end{cases}
\end{equation}
where $c_{ij}=\binom{i+j+1}{j}-\binom{i+j+1}{i}$.
It is well known that if $\E\subseteq\F$ is the finite field with $p^n$ elements and if $\F$ is perfect, then one has
a (non-canonical) isomorphism
\begin{equation}
\label{eq:zas3}
L(\E)\simeq W(1;n)
\end{equation}
(cf. \cite{bemis:alza}, \cite{lanc:tesi}, or also \cite[Theorem~7.6.3(1)]{strade:book1} for $p$ odd). If $p>2$, then $W(1;n)$ is a simple Lie algebra (cf. \cite[Theorem~4.2.4]{strafar:book}); while if $p=2$ and $n>1$ then
\begin{equation}
\label{eq:zas4}
W(1;n)^{(1)}=[W(1;n),W(1;n)]=\spn_{\F}\{\,e_j\mid -1\leq j\leq 2^n-3\,\}
\end{equation}
is simple (cf. \cite{dz}).
The minimal $p$-envelope $\euW(1;n)=\env_{p}(W(1;n))$ of the Zassenhaus algebra $W(1;n)$ coincides with the deriviation algebra of $W(1;n)$
(cf. \cite[Theorem~7.1.2(2) and Theorem~7.2.2(1)]{strade:book1}) and is also called the {\it restricted Zassenhaus algebra}, i.e.,
identifying $W(1;n)$ with its image in the Lie algebra $\Der(W(1;n))$ one has
\begin{equation}
\label{eq:derwitt}
\euW(1;n)=\bigoplus_{1\leq k\leq n-1} \F\cdot \der^{[p]^k}\oplus W(1;n).
\end{equation}
The following result holds.

\begin{prop} 
\label{prop:env}
If $p=2$ and $n>1$, then $\euW(1;n)$ is isomorphic to the minimal $2$-envelope of the simple Lie algebra $W(1;n)^{(1)}$.
\end{prop}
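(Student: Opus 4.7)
Set $L := W(1;n)^{(1)}$ and $R := \euW(1;n)$. First note that $L$ is an ideal of $R$: it is an ideal of $W(1;n)$ by definition, and the commutator formula $[\der^{[2]^k}, e_j] = e_{j-2^k}$ (valid when $j \geq 2^k-1$; $0$ otherwise) keeps each bracket with the extra generators $\der^{[2]^k}$ inside $L$. The adjoint action therefore yields a restricted Lie algebra homomorphism $R \to \Der(L)$ whose kernel is $C_R(L)$ and whose image is a restricted subalgebra containing $\ad(L)$. Since $\env_2(L)$ can be realised as the restricted subalgebra of $\Der(L)$ generated by $\ad(L)$ (using that $L$ is centreless), my plan is to verify (A) that $L$ generates $R$ as a restricted Lie algebra, and (B) that $C_R(L) = 0$; together these yield an isomorphism $R \cong \env_2(L)$.

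For (A), by \eqref{eq:derwitt} and \eqref{eq:zas4} the complement of $L$ in $R$ is spanned by $e_{2^n-2}$ and the elements $\der^{[2]^k}$ for $1 \leq k \leq n-1$. Since $\der = e_{-1} \in L$, iterating the $[2]$-operation yields every $\der^{[2]^k}$, so only $e_{2^n-2}$ remains. The key identity is
\begin{equation*}
e_{2^{n-1}-1}^{[2]} = e_{2^n-2},
\end{equation*}
which I plan to verify inside $\Der(\caO(1;n))$ by expanding
\begin{equation*}
(x^{(k)}\der)^{2} = \binom{2k-1}{k}\,x^{(2k-1)}\der + \binom{2k}{k}\,x^{(2k)}\der^{2}
\end{equation*}
and specialising to $k = 2^{n-1}$: the second summand vanishes since $x^{(2^n)} = 0$ in $\caO(1;n)$, while Lucas' theorem applied to the all-ones binary expansion of $2^n-1$ gives $\binom{2^n-1}{2^{n-1}} \equiv 1 \pmod{2}$, so the first summand is $x^{(2^n-1)}\der = e_{2^n-2}$. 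Since $n > 1$ ensures $2^{n-1}-1 \leq 2^n-3$, we have $e_{2^{n-1}-1} \in L$, completing (A).

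For (B), I would parameterise an arbitrary $y \in R$ as $y = f\der + \sum_{k=1}^{n-1} b_k\der^{[2]^k}$ with $f \in \caO(1;n)$ and bracket successively against $e_{-1}$, $e_0$, and $e_{2^{n-1}-1}$: the first bracket equals $-\der(f)\der$, forcing $f$ to be a scalar; the second equals that scalar times $e_{-1}$, forcing it to vanish; the third reduces to $\sum_{k=1}^{n-1} b_k\,e_{2^{n-1}-1-2^k}$, a sum of pairwise distinct basis vectors of $R$ (the case $k = n-1$ contributing $e_{-1}$), so every $b_k = 0$.

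The main obstacle is (A): specifically, the binomial coefficient analysis that realises the ``missing'' basis vector $e_{2^n-2}$ as an explicit $[2]$-power of an element of $L$. Step (B) is then a routine bracket computation using the structure constants of the Witt algebra.
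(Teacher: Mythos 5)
Your proposal is correct and follows essentially the same route as the paper: both arguments realise the minimal $2$-envelope via the (restricted) adjoint representation using that the centralizer of $W(1;n)^{(1)}$ in $\euW(1;n)$ vanishes, and both hinge on the same key identity $e_{2^{n-1}-1}^{[2]}=e_{2^n-2}$ to show that $W(1;n)^{(1)}$ generates all of $\euW(1;n)$ as a restricted Lie algebra. Your binomial-coefficient verification of that identity and the explicit bracket computation for the centralizer are just more detailed versions of the steps the paper leaves as ``elementary calculations.''
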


\begin{proof}
Put $\euW=\euW(1;n)=\Der(W(1;n))$ and consider the canonical injective map
\begin{equation}
\label{eq:canma}
\alpha\colon 
\xymatrix{
W(1;n)^{(1)}\ar[r]^-j& W(1;n)\ar[r]^-{\ad_W}&\euW}.
\end{equation} 
As
$C_{\euW}(\der)=\spn_{\F}\{\,\der^{[2]^j}\mid 0\leq j\leq n-1\,\}$, one has
that $C_\euW(A)=0$ for $A=\image(\alpha)$.
Hence, by Fact~\ref{fact:simp}, $\euH=\langle A\rangle_p$ - the restricted Lie subalgebra of $\euW$
generated by $A$ - is a minimal $p$-envelope of $W(1;n)^{(1)}$
(cf. \cite[Theorem~1.1.7]{strade:book1}). For simplicity we assume that $\alpha$ is given by inclusion. By construction, $\bigoplus_{1\leq k\leq n-1} \F\cdot \der^{[p]^k}\oplus W(1;n)^{(1)}$
is an $\F$-subspace of $\euH$.
An elementary calculation shows that $e_{2^{n-1}-1}^{[2]}=e_{2^n-2}$
(cf. \cite[\S 1]{joe:coh}). Thus $\euH=\euW$, and this yields the claim.
\end{proof}

\subsection{Toral complements}
\label{ss:torcom} 
We define the restricted Lie subalgebra $\euB$ of $\euW(1;n)$ by
\begin{equation}
\label{eq:zas5}
\euB=\spn_{\F}\{\,e_j\mid 0\leq j\leq p^n-2\,\},
\end{equation}
i.e., $\euB\subseteq W(1;n)$.

\begin{prop}
\label{prop:witt}
Let $\F$ be a perfect field of characteristic $p$. 
\begin{itemize}
\item[(a)] Let
$\eut\subseteq\euW(1;n)$ be any torus of maximal dimension. Then $\eut$ has dimension $n$.
\item[(b)] There exists a torus $\eut_0\subset \euW(1;n)$ of maximal dimension such that 
\begin{equation}
\label{eq:max1}
\euW(1;n)=\eut_0\oplus\euB.
\end{equation}
\end{itemize}
\end{prop}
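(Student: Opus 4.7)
I prove (b) first: the $n$-dimensional torus it constructs supplies the lower bound $\dim\eut \ge n$ in (a), and the upper bound $\dim\eut \le n$ follows from the classical computation of the absolute toral rank of the restricted Zassenhaus algebra (see e.g.\ \cite[Theorem~7.5.1]{strade:book1}).

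For (b), pick an isomorphism $\phi\colon L(\F_{p^n}) \to W(1;n)$ as in \eqref{eq:zas3} and set $v:=\phi(u_0)\in W(1;n)\subseteq\euW(1;n)$. Because $[u_0,u_a]=a\,u_a$ in $L(\F_{p^n})$ and $a^{p^n}=a$ for all $a\in\F_{p^n}$, one has $\ad(v)^{p^n}=\ad(v)$ on $W(1;n)$. The proof of Proposition~\ref{prop:env} shows that $\euW(1;n)$ is centreless, so the element $v^{[p]^n}-v$---which centralizes $W(1;n)$---must vanish: $v^{[p]^n}=v$ in $\euW(1;n)$. Hence $\eut_0 := \langle v\rangle_p = \spn_\F\{v^{[p]^k}\mid 0\le k\le n-1\}$ is an abelian restricted subalgebra on which $[p]$ is bijective---i.e., a torus---and its $\F$-dimension equals $n$ by Artin's independence of the Frobenius functionals $a\mapsto a^{p^k}$ on $\F_{p^n}$. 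An explicit $\F_p$-basis of toral elements is given by $t(\lambda):=\sum_{k=0}^{n-1}\lambda^{p^k}v^{[p]^k}$, parametrized by $\lambda\in\F_{p^n}$.

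To conclude $\euW(1;n)=\eut_0\oplus\euB$, I show that the composition $\eut_0\hookrightarrow\euW(1;n)\twoheadrightarrow\euW(1;n)/\euB$ is an isomorphism. The quotient has $\F$-basis $\{\overline{\partial^{[p]^k}}\mid 0\le k\le n-1\}$, so the question reduces to the non-singularity of the $n\times n$ matrix whose entries are the coefficients of $\partial^{[p]^j}$ in $v^{[p]^k}$ modulo $\euB$. This is a Moore-type matrix, whose determinant is nonzero by Frobenius independence once one knows that $v$ has a nonzero $\partial$-component modulo $\euB$. The principal obstacle is precisely this compatibility check: the torus $\eut_0$ is most naturally described in the Zassenhaus picture, while $\euB$ lives in the Witt picture, so matching the two requires either an explicit form of $\phi$ (as in \cite{bemis:alza,lanc:tesi}) or a genericity argument via $\mathrm{Aut}(\euW(1;n))$ to move $\eut_0$ into general position relative to $\euB$. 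In the base case $p=2$, $n=2$ a direct calculation reveals the toral elements of $\euW(1;2)$ as the two-parameter family $t(\alpha,\beta)=\alpha\partial+(1+\alpha\beta)e_0+\beta e_1+\beta^2 e_2+\alpha^2\partial^{[2]}$, and the pair $\{t(1,0),t(\omega,1)\}$ with $\omega\in\F$ a primitive cube root of unity commutes and projects to $\F$-linearly independent vectors in $\euW(1;2)/\euB$, directly verifying the decomposition.
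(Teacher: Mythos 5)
Your construction of an $n$-dimensional torus from $v=\phi(u_0)$ is sound (it is essentially what the paper does for $p>2$), and outsourcing the upper bound $\dim\eut\le n$ to Strade is acceptable — the paper does much the same via the embedding $\euW(1;n)\hookrightarrow\euW(n;\underline{1})$. But the heart of part (b), namely that $\eut_0$ is actually a \emph{complement} to $\euB$, is exactly the step you leave open, and it is not a routine compatibility check. The property genuinely depends on the choice of $\phi$: already for $n=1$ the $\F_p$-graded isomorphism $L(\F_p)\to W(1;1)$ sends $u_0$ to a multiple of $e_0\in\euB$, so that $\langle\phi(u_0)\rangle_p\subseteq\euB$ and the decomposition fails outright. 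Hence no genericity hand-wave will do; one must exhibit a specific toral element with a nonzero component outside $\euB$. Your ``Moore-type matrix'' reduction also needs care: since the $[p]$-map is not additive, the coefficient of $\partial^{[p]^j}$ in $v^{[p]^k}$ modulo $\euB$ is not simply a $p^k$-th power of a coefficient of $v$; what one actually obtains (in characteristic $2$, using the Jacobson formula and the fact that $\euB$ is $[p]$-closed) is a triangular matrix with diagonal entries $\alpha^{2^k}$, where $\alpha$ is the $e_{-1}$-coefficient of $v$ — so everything again hinges on proving $\alpha\neq 0$, which you do not do. The explicit $(p,n)=(2,2)$ computation is a sanity check for one case, not a proof for general $n$.

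The paper closes precisely this gap by never leaving the Witt picture: for $p=2$ it takes the explicit element $s=e_{-1}+e_{2^n-2}\in W(1;n)$ and shows by induction that
\begin{equation*}
s^{2^k}=\partial^{2^k}+e_{2^n-2^k-1}\quad(1\le k\le n-1),\qquad s^{2^n}=s.
\end{equation*}
The second identity shows $\ad_W(s)$ is semisimple with $\spn_\F\{\ad_W(s)^{2^i}\mid 0\le i\le n-1\}$ an $n$-dimensional torus, and the first makes the complement property transparent, since the images of $s,s^2,\dots,s^{2^{n-1}}$ in $\euW(1;n)/\euB$ are exactly $\overline{\partial},\overline{\partial^2},\dots,\overline{\partial^{2^{n-1}}}$ (the terms $e_{2^n-2^k-1}$ all lie in $\euB$). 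For $p>2$ the paper uses $u_0$ as you do and cites Strade's proof of Theorem~7.6.3(2) for the complement. To repair your argument you would either need to import an explicit form of $\phi$ and verify $\alpha\neq0$, or — more simply — replace $v$ by the paper's element $s$.
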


\begin{proof}
(a) For $p>2$ the assertion has been shown in \cite[Theorem~7.6.3]{strade:book1}. 
Hence we may assume that $\F$ is a perfect field of characteristic $2$. 
Define $s=e_{-1}+e_{2^{n}-2}\in W(1;n)$. By induction, one concludes that
\begin{align}
s^{2^{k}}&=\partial^{2^{k}}+e_{2^{n}-2^{k}-1}
\qquad\text{for $k\in\{1,\ldots,n-1\}$, and}\label{eq:spow1}\\
s^{2^{n}}&=\partial^{2^{n}}+e_{-1}+e_{2^{n}-2}=s.\label{eq:spow2}
\end{align}
In particular, the minimal polynomial of $\ad_W(s)$ divides the separable polynomial $T^{2^{n}}-T\in\F[T]$. Thus $\ad_W(s)\in\euW(1;n)$ is semi-simple and the elements 
$\ad_W(s)^{2^{i}}\in\Der(W(1;n))=\euW(1;n)$, $0\leq i\leq n-1$, are linearly independent.
Hence
$\eut_0=\spn_{\mathbb{F}}\{\,\ad(s)^{2^{i}}\mid 0\leq i\leq n-1\,\}$ is a torus of 
$\euW(1;n)$ of dimension $n$, i.e., the maximal dimension  $\mathrm{MT}(\euW(1;n))$ of a torus
of $\euW(1;n)$ must be greater or equal to $n$. 
Moreover the $\F$-algebras $\caO(1;n)$ and $\caO(n;\underline{1})$ are isomorphic, so there exists an injective homomorphism of restricted Lie algebras $i\colon \euW(1;n)\hookrightarrow \euW(n;\underline{1})$. Since $C(\euW(1;n))=0$  (cf. Proposition~\ref{prop:env} and \cite[Proposition~1.4]{joe:coh}), it follows from \cite[Lemma~ 1.2.6(1)]{strade:book1} that $\mathrm{MT}(\euW(1;n))=\mathrm{TR}(W(1;n))$, and thanks to \cite[Theorem~1.2.7(1)]{strade:book1} this yields 
$\mathrm{MT}(\euW(1;n))\leq \mathrm{MT}(\euW(n;\underline{1}))=n$ (cf. \cite[Corollary~7.5.2]{strade:book1}). So we may conclude that the maximal dimension $\mathrm{MT}(\euW(1;n))$ of a torus of $\euW(1;n)$ is equal to $n.$ 

\noindent
(b) For $p=2$ the just mentioned argument shows that $\eut_0\oplus\euB=\euW(1;n)$.
For $p>2$ one may identify $W(1;n)$ with $L(\E)$, where $\E\subseteq\F$, $|\E|=p^n$, and define 
$\eut_0=\spn_{\F}\{\, u_0^{[p]^j}\mid 0\leq j\leq n-1\,\}$ 
(cf. the proof of \cite[Theorem~7.6.3(2)]{strade:book1}).
\end{proof}


\subsection{The restricted subalgebra $\euB$}
\label{ss:subB}
Note that $e_0\in\euB$ is a toral element. Moreover, as
$\rdn_{u}(\euB)=\spn_{\F}\{\,e_j\mid 1\leq j\leq p^n-2\,\}$, one has
\begin{equation}
\label{eq:max2}
\euB=\eus\oplus \rdn_{u}(\euB),
\end{equation}
where $\eus=\F\cdot e_0$. For $\lambda\in\{0,\ldots,p-1\}$ we denote by
$\F[\lambda]$ the $1$-dimensional restricted $\euB$-module satisfying
\begin{equation}
\label{eq:max3}
e_0\cdot z=\lambda\cdot z \qquad\text{for $z\in\F[\lambda]$.}
\end{equation}
In particular, $x\cdot z=0$ for all $x\in\rdn_u(\euB)$ and $z\in\F[\lambda]$.
For our purpose the following property will turn out to be useful (cf. \cite[Proposition~4.7(1) and (2)]{joe:coh}).

\begin{prop}
\label{prop:repwitt}
Let $\F$ be a field of characteristic $p>0$ and let $\euB$ be as described above.
Then one has isomorphisms of restricted $\euW(1;n)$-modules
\begin{align}
\idn_{\euB}^{\euW(1;n)}(\F[-2])&\simeq W(1;n),\label{eq:rep1}\\
\idn_{\euB}^{\euW(1;n)}(\F[1])&\simeq W(1;n)^\ast=\Hom_{\F}(W(1;n),\F).\label{eq:rep2}
\end{align}
\end{prop}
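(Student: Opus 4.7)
The approach will be Frobenius reciprocity. For a restricted $\euB$-module $N$ and a restricted $\euW(1;n)$-module $M$ one has a natural isomorphism
\begin{equation*}
\Hom_{\uu(\euW(1;n))}\bigl(\idn_{\euB}^{\euW(1;n)}(N),M\bigr)\cong\Hom_{\uu(\euB)}(N,\rst_{\euB}^{\euW(1;n)}M),
\end{equation*}
so for $N=\F[\lambda]$ a homomorphism on the left corresponds to a vector $v\in M$ satisfying $\rdn_u(\euB)\cdot v=0$ and $e_0\cdot v=\lambda\,v$, i.e., a $\rdn_u(\euB)$-invariant of $e_0$-weight $\lambda$. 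The plan is to exhibit such a vector in each of $M=W(1;n)$ (for $\lambda=-2$) and $M=W(1;n)^{\ast}$ (for $\lambda=1$), and then to deduce that the resulting homomorphism is an isomorphism by a dimension count together with a surjectivity check.

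For \eqref{eq:rep1}, the natural candidate inside the adjoint $\euW(1;n)$-module $W(1;n)$ is $v=e_{p^n-2}$. From \eqref{eq:stcon} one finds $[e_0,e_{p^n-2}]=(p^n-2)\,e_{p^n-2}\equiv-2\,e_{p^n-2}\pmod p$, while $[e_i,e_{p^n-2}]=0$ for every $i\geq 1$ because $i+(p^n-2)$ lies outside the admissible index range $\{-1,\dots,p^n-2\}$. Frobenius reciprocity then produces a homomorphism $\varphi\colon\idn_{\euB}^{\euW(1;n)}(\F[-2])\to W(1;n)$ with $\varphi(1\otimes z)=e_{p^n-2}$.

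For \eqref{eq:rep2}, the analogous candidate inside $W(1;n)^{\ast}$, equipped with the contragredient action $(x\cdot f)(w)=-f([x,w])$, is $v=e_{-1}^{\ast}$. Using $[e_0,e_{-1}]=-e_{-1}$ one obtains $e_0\cdot e_{-1}^{\ast}=e_{-1}^{\ast}$. For $i\geq 1$, the condition that $[e_i,e_k]$ be a nonzero scalar multiple of $e_{-1}$ forces $i+k=-1$ and thus $k\leq-2$, which lies outside the admissible range, so $e_i\cdot e_{-1}^{\ast}=0$. Frobenius reciprocity then yields $\psi\colon\idn_{\euB}^{\euW(1;n)}(\F[1])\to W(1;n)^{\ast}$ with $\psi(1\otimes z)=e_{-1}^{\ast}$.

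To conclude, Proposition~\ref{prop:witt}(b) gives $\dim\euW(1;n)-\dim\euB=n$, so both induced modules have dimension $p^n=\dim W(1;n)=\dim W(1;n)^{\ast}$, reducing the task to surjectivity. The derivation $e_{-1}=\partial$ acts as an index-lowering operator on $W(1;n)$, namely $[e_{-1},e_j]=e_{j-1}$ for every $0\leq j\leq p^n-2$, so iteration on $e_{p^n-2}$ recovers every basis vector of $W(1;n)$ and $\varphi$ is surjective. Dually, $e_{-1}\cdot e_j^{\ast}=-e_{j+1}^{\ast}$ for $-1\leq j\leq p^n-3$, so iteration on $e_{-1}^{\ast}$ recovers every $e_k^{\ast}$, showing that $\psi$ is surjective as well. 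The only real technical point will be the careful tracking of the structure constants $c_{ij}$ from \eqref{eq:stcon} in these bracket computations; beyond that, the argument is elementary linear algebra and the combinatorics of the admissible index range.
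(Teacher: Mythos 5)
Your argument is correct and complete. Note that the paper does not actually prove Proposition~\ref{prop:repwitt}: it simply refers to \cite[Proposition~4.7(1) and (2)]{joe:coh}, so you have supplied a self-contained verification of an imported fact. Your computations check out against \eqref{eq:stcon}: with $c_{ij}=\binom{i+j+1}{j}-\binom{i+j+1}{i}$ one gets $c_{0,j}=j$, so $e_{p^n-2}$ has $e_0$-weight $p^n-2\equiv -2$, and $c_{-1,j}=1$, so $\ad(e_{-1})$ really is the index-lowering operator you need for surjectivity; the dual computations for $e_{-1}^{\ast}$ are equally routine, and the dimension count $\dim\idn_{\euB}^{\euW(1;n)}(\F[\lambda])=p^{\dim\euW(1;n)-\dim\euB}=p^{n}$ closes the argument. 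Two small remarks. First, you invoke Proposition~\ref{prop:witt}(b) for the codimension $n$ of $\euB$, but that proposition assumes $\F$ perfect, whereas Proposition~\ref{prop:repwitt} is stated over an arbitrary field of characteristic $p>0$; the codimension is in any case immediate from the explicit bases in \eqref{eq:derwitt} and \eqref{eq:zas5} ($\dim\euW(1;n)=p^{n}+n-1$, $\dim\euB=p^{n}-1$), so you should cite those instead. Second, for the record, the vectors you exhibit must generate one-dimensional \emph{restricted} $\euB$-submodules; this is automatic here because a linear map commuting with the $\euB$-action between restricted modules is a $\uu(\euB)$-homomorphism, and $\lambda\in\{-2,1\}\subseteq\F_p$ is compatible with $e_0^{[p]}=e_0$, but it is worth a sentence.
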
 

\begin{rem}
\label{rem:L}
By Fact~\ref{fact:simp}(a), $W(1;n)$ is a $\euW(1;n)$-submodule of the adjoint 
$\euW(1;n)$-module containing $L=\mathrm{soc}_{\euW(1;n)}(\euW(1;n))$. Hence $L$ is equal to $\mathrm{soc}_{\euW(1;n)}(W(1;n))$. As $\dim(W(1;n)/L)$  is equal to $1$ for $p=2$
and $0$ otherwise, this shows that $L$ is the unique maximal $\euW(1;n)$-submodule of $W(1;n)$.
\end{rem}


\subsection{Projective indecomposable modules}
\label{ss:pims}
Proposition~\ref{prop:witt} has the following consequence for $p=2$ .

\begin{thm}
\label{thm:witt2}
Let $\F$ be an algebraically closed field of characteristic $2$, let $n\geq 1$.
\begin{itemize}
\item[(a)] $\Irr_p(\euW(1;n))=\{\,[\F],[L]\,\}$, where $\F$ denotes the $1$-dimensional trivial 
$\euW(1;n)$-module, and $L=\soc_{\euW(1;n)}(\euW(1;n))=W(1;n)^{(1)}$ 
(cf. Remark~\ref{rem:L}).
\item[(b)] If $n\geq 2$, $\euB$ is a generalized Borel subalgebra of $\euW(1;n)$.
\item[(c)] Let $\eut_0$ be a torus of maximal dimension of $\euW(1;n)$
satisfying $\euW(1;n)=\eut_0\oplus\euB$ (cf. \eqref{eq:max1}). Then
$L^{\eut_0}=0$. In particular, $\euW(1;n)$ is a restricted Lie algebra with maximal $0$-p.i.m., i.e., if $P_{\F}$ is the projective cover of the
$1$-dimensional trivial $\euW(1;n)$-module, then one has
$P_{\F}\simeq\idn_{\eut_0}^{\euW(1;n)}(\F(0))$. Hence
\begin{equation}
\label{eq:maxpim}
\dim(P_{\F})=2^{\dim(\euW(1;n))-\dim(\eut_0)}=2^{2^n-1}.
\end{equation}
\item[(d)] Let $P_L$ denote the projective cover of the irreducible $\euW(1;n)$-module $L$. Then $\dim(P_L)=2^{2^n-1}$. In particular,
$P_L\simeq\idn_{\eut_0}^{\euW(1;n)}(\F(\mu))$ for any non-trivial irreducible
$\eut_0$-module $\F(\mu)$, $\mu\in\eut_0^\circledast \setminus\{0\}$.
\end{itemize}
\end{thm}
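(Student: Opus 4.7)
The proof naturally splits: part (a) identifies all irreducible restricted $\euW(1;n)$-modules, and parts (b)--(d) then follow by Frobenius reciprocity and dimension counting.

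For (a) the idea is Frobenius reciprocity with $\euB$. Since $\rdn_{u}(\euB)$ is a $p$-nilpotent restricted ideal of $\euB$, every nonzero restricted $\euW(1;n)$-module $S$ has $S^{\rdn_{u}(\euB)}\neq 0$, and this fixed space is a module for the $1$-dimensional torus $\euB/\rdn_{u}(\euB) = \F\cdot e_0$, hence contains a weight vector of weight $\lambda \in \F_2 = \{0,1\}$. Thus any irreducible $S$ is a quotient of $\idn_{\euB}^{\euW(1;n)}(\F[\lambda])$ for some $\lambda$, and by Proposition~\ref{prop:repwitt} these two induced modules are $W(1;n)$ and $W(1;n)^{\ast}$, whose composition factors are $\{\F, L\}$ and $\{\F, L^{\ast}\}$ respectively (using Remark~\ref{rem:L} and duality). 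The crucial step is to prove $L\cong L^{\ast}$. For this I would exhibit the vector $e_{2^n-3}\in L$ and, using the characteristic-$2$ identity $\binom{2^n-1}{k} \equiv 1\pmod 2$ in the structure constants $c_{ij}$, verify directly that $\rdn_{u}(\euB)\cdot e_{2^n-3} = 0$ and $e_0\cdot e_{2^n-3} = e_{2^n-3}$. This gives a nonzero, and hence surjective, $\euW(1;n)$-homomorphism $W(1;n)^{\ast}\cong\idn_{\euB}^{\euW(1;n)}(\F[1]) \to L$; dualizing embeds $L^{\ast}$ into $\soc_{\euW(1;n)}(W(1;n)) = L$, forcing $L^{\ast}\cong L$.

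Part (b) is then a routine check: $\rdn_{u}(\euB)\neq 0$ when $n\geq 2$ is immediate, and the same Frobenius reciprocity computations give $\dim\F^{\rdn_{u}(\euB)} = \dim L^{\rdn_{u}(\euB)} = 1$ with distinct $e_0$-weights $0$ and $1$, whence both fixed spaces are $\euB/\rdn_{u}(\euB)$-irreducible and $\chi_{\euB}$ is injective. For (c), the torus $\eut_0$ is self-centralizing in $\euW(1;n)$ (via the embedding $\euW(1;n)\hookrightarrow\euW(n;\underline{1})$ from the proof of Proposition~\ref{prop:witt}(a)), so the zero weight space of $\ad(\eut_0)$ on $\euW(1;n)$ is $\eut_0$ itself. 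Inspection of the basis $\{s,\ s^{2^k} = \partial^{2^k} + e_{2^n - 2^k - 1} : 1 \leq k \leq n-1\}$ of $\eut_0$ shows $\eut_0\cap L = 0$, hence $L^{\eut_0} = 0$. Since the multiplicity of an irreducible $S$ in the head of $P(\mu)$ equals $\dim S_\mu$ by Frobenius reciprocity, $P(0)$ has simple top $\F$, so $P(0) \cong P_{\F}$; the formula $\dim P_{\F} = 2^{2^n - 1}$ and the maximal $0$-p.i.m.\ property follow via \cite[Lemma~6.1]{fsw:max}.

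For (d), for $\mu\neq 0$ one has $\dim\F_\mu = 0$, so $P(\mu)$ is a direct sum of $\dim L_\mu$ copies of $P_L$, giving $\dim L_\mu\cdot\dim P_L = \dim P(\mu) = 2^{2^n - 1}$. Consequently $\dim L_\mu$ is the same positive integer for every $\mu\neq 0$, and summing $\sum_{\mu\neq 0}\dim L_\mu = \dim L - \dim L_0 = 2^n - 1$ forces $\dim L_\mu = 1$, whence $P(\mu)\cong P_L$ and $\dim P_L = 2^{2^n-1}$. The main obstacle throughout is the self-duality $L\cong L^{\ast}$ in (a), which depends critically on the characteristic-$2$ identity for $\binom{2^n-1}{k}$ at the precise vector $e_{2^n-3}$.
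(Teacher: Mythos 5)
Your treatment of (a), (b) and (d) follows the paper's strategy: Frobenius reciprocity with $\euB$ together with Proposition~\ref{prop:repwitt} and Remark~\ref{rem:L} identifies the irreducibles, and your weight-multiplicity count over all $\mu\neq 0$ in (d) is an equivalent variant of the paper's decomposition $\uu(\euW(1;n))\simeq P_{\F}\oplus\dim(L)\cdot P_L$. Your route to $L\simeq L^{\ast}$ via the weight vector $e_{2^n-3}$ is more computational than the paper's (which simply notes that $[L]$ must occur in the list $\{[\F],[L^{\ast}]\}$ and has dimension $>1$), but it is correct.

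The genuine gap is in (c). You deduce $L^{\eut_0}=0$ from the claim that $\eut_0$ is self-centralizing in $\euW(1;n)$, justified only ``via the embedding $\euW(1;n)\hookrightarrow\euW(n;\underline{1})$''. That embedding is used in Proposition~\ref{prop:witt} solely to bound $\mathrm{MT}(\euW(1;n))$ from above; it does not by itself give $C_{\euW(1;n)}(\eut_0)=\eut_0$. In general the centralizer of a torus of maximal dimension is only a nilpotent restricted subalgebra of the form $\eut_0\oplus\mathfrak{n}$ with $\mathfrak{n}$ consisting of $p$-nilpotent elements, and since $L^{\eut_0}=L\cap C_{\euW(1;n)}(\eut_0)$, your observation that $\eut_0\cap L=0$ does not rule out invariants coming from $\mathfrak{n}$. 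To close this you would need either to show that $\ad_W(s)$ has all $2^n$ eigenvalues of $\F_{2^n}$ on $W(1;n)$ (so that the zero weight space of $\eut_0$ on $\euW(1;n)$ has dimension exactly $n$), or to invoke conjugacy of maximal tori in $W(n;\underline{1})$ and check that the standard representatives are self-centralizing --- a nontrivial extra input in either case. The paper avoids this entirely: since $\euW(1;n)=\eut_0\oplus\euB$, the PBW theorem gives $\rst^{\euW(1;n)}_{\eut_0}(\idn_{\euB}^{\euW(1;n)}(\F[0]))\simeq\uu(\eut_0)$ as a free module of rank one; combined with $\idn_{\euB}^{\euW(1;n)}(\F[0])\simeq W(1;n)$ and $W(1;n)/L\simeq\F$ this identifies $\rst^{\euW(1;n)}_{\eut_0}(L)$ with the augmentation ideal of $\uu(\eut_0)$, whence $L^{\eut_0}=0$ with no further computation. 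You should replace your centralizer argument with this one, or supply an actual proof of self-centralization.
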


\begin{proof} 
(a) Let $\euW=\euW(1;n)$.  Since $\euB/\rdn_u(\euB)\simeq\eus$, one has
\begin{equation}
\Irr_p(\euB)=\{[\F[0]],[\F[1]]\}.
\end{equation}
Let $S$ be an irreducible restricted $\euW$-module, and let
$\Sigma=\soc_{\euB}(\rst^{\euW}_{\euB}(S))$.
Then $\Sigma$ contains either a restricted $\euB$-submodule isomorphic
to $\F[0]$ or $\F[1]$ or both. As
\begin{equation}
\label{eq:maxpim2}
\Hom_{\euB}(\F[\lambda],\rst^{\euW}_{\euB}(S))\simeq
\Hom_{\euW}(\idn_{\euB}^{\euW}(\F[\lambda]),S),\qquad \lambda\in\{0,1\},
\end{equation}
Proposition~\ref{prop:repwitt} implies that 
$S$ is either a homomorphic image of 
$\idn_{\euB}^{\euW}(\F[0])$, in which case $S\simeq W/L\simeq \F$ (cf.~Remark~\ref{rem:L}), or
$S$ is a homomorphic image of 
$\idn_{\euB}^{\euW}(\F[1])$, in which case $S\simeq L^\ast$.
Hence $\Irr_p(\euW)=\{\,[\F], [L^\ast]\,\}$. 
As $L$ is an irreducible restricted $\euW$-module, comparing dimensions yields that $L\simeq
L^\ast$ as restricted $\euW$-modules.

\noindent
(b) As $\euB/\rdn_u(\euB)\simeq\eus$ is a torus, $M^{\rdn_u(\euB)}$ is a semi-simple restricted $\euB/\rdn_u(\euB)$-module for any finite-dimensional $\euW$-module $M$. Hence
\begin{equation}
\label{eq:maxpim3}
M^{\rdn_u(\euB)}=\soc_{\euB}(\rst^{\euW}_{\euB}(M)).
\end{equation}
Obviously,
$\F^{\rdn_u(\euB)}=\F[0]$ and $L^{\rdn_u(\euB)}=\F\cdot e_{2^{n}-3},$ i.e., $L^{\rdn_u(\euB)}\simeq\F[1]$ as $\euB$-module.
So the mapping $\chi_\euB=[\argu^{\rdn_u(\euB)}]\colon
\Irr_p(\euW)\to\Irr_p(\euB/\rdn_u(\euB))$ is injective 
showing that $\euB$ is a generalized Borel subalgebra.

\noindent
(c) As $\euW=\eut_0\oplus\euB$, one has
\begin{equation}
\label{eq:maxpim4}
U=\rst^{\euW}_{\eut_0}(\idn_{\euB}^{\euW}(\F[0]))\simeq
\uu(\eut_0)\otimes_{\F}\F[0]\simeq\uu(\eut_0),
\end{equation}
i.e., $U$ is a free $\uu(\eut_0)$-module of rank $1$.
As $\idn_{\euB}^{\euW}(\F[0])$ is isomorphic to the restricted
$\euW$-module $W$ (cf. \eqref{eq:rep1}), one has a short exact sequence of restricted 
$\eut_0$-modules
\begin{equation}
\label{eq:maxpim5}
\xymatrix{
0\ar[r]& \rst^{\euW}_{\eut_0}(L)\ar[r]&
U\ar[r]&\F(0)\ar[r]&0,
}
\end{equation}
where $\F(0)$ denotes the $1$-dimensional trivial $\eut_0$-module, i.e.,
$\rst^{\euW}_{\eut_0}\!(\!L\!)$ is isomorphic to the
augmentation ideal $\kernel(\eps\colon\uu(\eut_0)\to\F)$ of
$\uu(\eut_0)$. Hence $L^{\eut_0}=0$.
Since any non-trivial, irreducible restricted $\euW$-module must be isomorphic to $L$, one concludes from \cite[Lemma~6.1]{fsw:max} that the projective cover $P_\F$ of the $1$-dimensional irreducible $\euW$-module $\F$ must be isomorphic to $\idn_{\eut_0}^{\euW}(\F(0))$.

\noindent
(d) As one has an isomorphism  $\uu(\euW)\simeq P_\F\oplus \dim(L)\cdot P_L$ of $\euW$-modules, one concludes
that
\begin{equation}
\label{eq:maxpim6}
\dim(P_L)=\frac{2^{2^n+n-1}-2^{2^n-1}}{2^n-1}=
\frac{2^{2^n-1}(2^n-1)}{2^n-1}=2^{2^n-1}.
\end{equation}
If $\F(\mu)$ is a non-trivial irreducible $\eut_0$-module,
then $\idn_{\eut_0}^{\euW}(\F(\mu))$ is projective, and has
dimension equal  to $2^{2^n-1}$. As $\F(\mu)$ is isomorphic to
a direct summand of $L$ (cf. \eqref{eq:maxpim2}),
$P_L$ is a homomorphic image of $\idn_{\eut_0}^{\euW}(\F(\mu))$.
Since both restricted $\euW$-modules have the same dimension, they must be isomorphic. This completes the proof.
\end{proof}

\begin{rem}
\label{rem:n}
In particular, Theorem \ref{thm:witt2}(b) is also true for $n=1$ if in the definition of a generalized Borel subalgebra condition (i) is omitted.
\end{rem}

\begin{rem}
\label{rem:final}
Let $\F$ be an algebraically closed field of characteristic $p$, let $\Lie$ be a finite-dimensional restricted Lie algebra, and let $\eut\subseteq\Lie$ be a torus of maximal dimension. 
If $P$ is a projective indecomposable restricted $\Lie$-module, then $P$ is isomorphic to the projective cover $P_S$ for some irreducible restricted $\Lie$-module $S$.
Let $\F(\mu)$ be an irreducible restricted $\eut$-submodule of $\rst^{\Lie}_{\eut}(S)$.
Then - as $\Hom_{\Lie}(\idn_{\eut}^{\Lie}(\F(\mu)),S)\not=0$ - $P$ is isomorphic to a direct summand of
$P(\mu)=\idn_{\eut}^{\Lie}(\F(\mu))$ which is a projective restricted $\Lie$-module for any $\mu\in\eut^\circledast$. Hence 
\begin{equation}
\label{eq:maxid}
\dim(P)\leq p^{\dim(\Lie)-\mathrm{MT}(\Lie)}.
\end{equation}
This shows that for $\Lie=\euW(1;n)$, $n>1$, and $p=2$, equality holds in
\eqref{eq:maxid} for any projective indecomposable restricted $\Lie$-module $P$.
Therefore, for the restricted Lie algebra $\euW$ all p.i.m.s have maximal possible dimension. 
\end{rem}


\providecommand{\bysame}{\leavevmode\hbox to3em{\hrulefill}\thinspace}
\providecommand{\MR}{\relax\ifhmode\unskip\space\fi MR }
\providecommand{\MRhref}[2]{%
  \href{http://www.ams.org/mathscinet-getitem?mr=#1}{#2}
}
\providecommand{\href}[2]{#2}

\end{document}